\newtheorem{teo}{Theorem}[section]
\newtheorem{lem}[teo]{Lemma}
\newtheorem{prop}[teo]{Proposition}
\theoremstyle{definition}
\newtheorem{oss}[teo]{Remark}
\newtheorem{assumption}{Assumption}
\newtheorem{claim}{Claim}
\renewcommand{\eqref}[1]{\textnormal{(\ref{#1})}}
\numberwithin{equation}{section}
\newcommand{\R}{\mathbb{R}}
\title[A counterexample for multiscale decompositions]{A counterexample to convergence for multiscale decompositions}
\author[S.~Rebegoldi]{Simone Rebegoldi}
\author[L.~Rondi]{Luca Rondi}
\address[S.~Rebegoldi]{Dipartimento di Scienze Fisiche, Informatiche e Matematiche, Universit\`a degli Studi di Modena e Reggio Emilia, Italy}
\email{simone.rebegoldi@unimore.it}
\address[L.~Rondi]{Dipartimento di Matematica, Universit\`a degli Studi di Pavia, Italy}
\email{luca.rondi@unipv.it}
\date{}
\begin{document}

\begin{abstract}
We discuss the convergence of the multiscale procedure by Modin, Nachman and Rondi, Adv. Math. (2019), which extended to inverse problems the 
multiscale decomposition of images by
Tadmor, Nezzar and Vese, Multiscale Model. Simul. (2004).
We show that, for the classical multiscale procedure, the multiscale decomposition might fail even for the linear case with a Banach norm as regularization.

\bigskip

\noindent\textbf{AMS 2020 MSC:} 
68U10 (primary);
65J22 (secondary)

\medskip

\noindent \textbf{Keywords:} multiscale decomposition,
imaging, regularization
\end{abstract}

\maketitle

\setcounter{section}{0}
\setcounter{secnumdepth}{2}

\section{Introduction}

Let $H$ be a Hilbert space with scalar product $\langle\cdot,\cdot\rangle_H$ and norm $\|\cdot\|_H$. Let $X$ be a Banach space and let $\Lambda:X\to H$ be a bounded linear operator.
Let $F\subset X$ be a Banach space. We assume that for any $\lambda>0$ and any $\tilde{\Lambda}\in H$ the following minimization problem admits a solution
\begin{equation}\label{existenceminimizereq}
\min\left\{\lambda\|\tilde{\Lambda}-\Lambda(u)\|_H^{2}+\|u\|_F:\ u\in X\right\}.
\end{equation}
where $\|\cdot\|_F$ is referred to as the \emph{regularization}.

When $X=H=L^2(\R^2)$, $\Lambda$ is the identity and $F=BV(\R^2)$, in its homogeneous version, that is, $\|u\|_{BV(\R^2)}=TV(u)$, $TV$ denoting the Total Variation, \eqref{existenceminimizereq} becomes
\begin{equation}\label{existenceminimizereq1}
\min\left\{\lambda\|\tilde{\Lambda}-u\|_{L^2(\R^2)}^{2}+TV(u):\ u\in L^2(\R^2)\right\}.
\end{equation}
which is the classical Rudin-Osher-Fatem model for denoising.

If $\Lambda:L^2(\R^2)\to L^2(\R^2)$ is a blurring operator, for instance $\Lambda(u)=K\ast u$, where $K\in L^{\infty}(\R^2)$ with compact support represents the point-spread function, 
\eqref{existenceminimizereq} becomes
\begin{equation}\label{existenceminimizereq2}
\min\left\{\lambda\|\tilde{\Lambda}-\Lambda(u)\|_{L^2(\R^2)}^{2}+TV(u):\ u\in L^2(\R^2)\right\}.
\end{equation}
which is the corresponding problem for deblurring.

The efficacy of the denoising or deblurring method strongly depends on the choice of the parameter $\lambda$. Tadmor, Nezzar and Vese, in \cite{T-N-V} for denoising and in \cite{T-N-V-2}
for deblurring and other imaging applications, proposed to find the correct $\lambda$ by an iterative procedure which also induces a multiscale decomposition of the looked-for original image. Namely, the multiscale procedure is the following. For $\tilde{\Lambda}\in H$, fixed
positive parameters $\lambda_n$, $n\geq 0$, 
let $\sigma_0=u_0\in X$ solve
\begin{equation}\label{regularizedpbm}
\min\left\{\lambda_0\|\tilde{\Lambda}-\Lambda(u)\|_H^{2}+\|u\|_F:\ u\in X\right\}.
\end{equation}
Then by induction we define $\sigma_n$, $n\geq 1$, as follows. Let $u_n$ be a solution to
 \begin{equation}\label{inductiveconstr}\min\left\{\lambda_n\|\tilde{\Lambda}-\Lambda(\sigma_{n-1}+u)\|_H^{2}+\|u\|_F:\ u\in X\right\}\end{equation}
and $\sigma_n=\sigma_{n-1}+u_n$, that is,
 \begin{equation}\label{partialsum1}
 \sigma_n=\sum_{j=0}^nu_j\quad\text{for any }n\in\mathbb{N}.
 \end{equation}
 The sequence $\{\sigma_n\}_{n\geq 0}$ exists, but it might be not uniquely determined.

Since 
\begin{equation}
\|\tilde{\Lambda}-\Lambda(\sigma_n)\|_H\leq  \|\tilde{\Lambda}-\Lambda(\sigma_{n-1})\|_H
 \quad\text{for any }n\geq 1,
\end{equation}
there exists
$$\lim_n\|\tilde{\Lambda}-\Lambda(\sigma_n)\|_H\geq \inf\big\{\|\tilde{\Lambda}-\Lambda(\sigma)\|:\ \sigma\in X\big\}.$$

From a theoretical point of view, first we can ask if $\sigma_n$ is indeed a minimizing sequence, that is, if
\begin{equation}\label{minimizing}
\lim_n\|\tilde{\Lambda}-\Lambda(\sigma_n)\|_H= \inf\big\{\|\tilde{\Lambda}-\Lambda(\sigma)\|:\ \sigma\in X\big\}.
\end{equation}
If \eqref{minimizing} holds, we can ask if $\sigma_n$ is converging in $X$. In fact, if $\sigma_{n_k}\to \sigma_{\infty}$ in $X$ as $k\to +\infty$, then $\sigma_{\infty}$
solves the minimization problem
\begin{equation}\label{minexist}
\min\big\{\|\tilde{\Lambda}-\Lambda(\sigma)|_H:\ \sigma\in X\big\}.
\end{equation}
Hence, if \eqref{minimizing} holds, \eqref{minexist} admitting a solution is a necessary condition for the convergence, maybe up to subsequences,
of the sequence $\sigma_n$.

The first convergence result was obtained in \cite{T-N-V} for the denoising case. Namely, if the noisy image $\tilde{\Lambda}$ belongs to $BV(\R^2)$ (or to an intermediate space between $L^2(\R^2)$ and $BV(\R^2)$), then we have the following multiscale decomposition
\begin{equation}\label{TNVconv}
\tilde{\Lambda}=\lim_n\sigma_n=\sum_{j=0}^{+\infty}u_j\quad\text{in }L^2(\Omega).
\end{equation}
provided $\lambda_n$ goes to $+\infty$ fast enough, for instance when $\lambda_n=\lambda_02^n$ for any $n\geq 0$.
In \cite{M-N-R}, the convergence analysis for these multiscale procedures has been extended to linear or even nonlinear, when $\Lambda$ is not a linear operator, inverse problems and to other applications such as image registration. In \cite[Theorem~2.1]{M-N-R}
it was shown that
\eqref{minimizing} holds under extremely general assumptions, from which in particular it follows that \eqref{TNVconv} holds even for $\tilde{\Lambda}\in L^2(\R^2)$.
The convergence of the sequence $\sigma_n$ is much more difficult to establish. Let us consider the following stronger assumptions.

\begin{assumption}\label{strongerassum} We assume that $F$ is dense and compactly immersed in $X$. Moreover, we assume that 
\begin{equation}\label{lsccondition}
\|\cdot\|_F\text{ is lower semicontinuous on }X,\text{ with respect to the convergence in }X.
\end{equation}
\end{assumption}

\begin{assumption}\label{stronger}
There exists $\hat{\sigma}\in F$ such that
$$
\|\tilde{\Lambda}-\Lambda(\hat{\sigma})\|_H=
\min\big\{\|\tilde{\Lambda}-\Lambda(\sigma)\|_H:\ \sigma\in X\big\}.
$$
\end{assumption}

Under Assumption~\ref{strongerassum},  in \cite{M-N-R} a so-called \emph{tighter multiscale procedure} was developed, for which \eqref{minimizing} still holds, 
see \cite[Theorem~2.4]{M-N-R}. Moreover, this tighter version has the advantage that if we further have
Assumption~\ref{stronger}, then convergence, up to a subsequence, of $\sigma_n$ in $X$ holds true, see \cite[Theorem~2.5]{M-N-R}.
After \cite{M-N-R}, the multiscale procedure was further analyzed and extended, see 
\cite{L-R-V,K-R-W,M-N}.

The question whether, under Assumptions~\ref{strongerassum} and \ref{stronger}, the tighter multiscale procedure is really needed or the classical one is enough to guarantee convergence
of $\sigma_n$ remained open. Here we show through an example that for the classical multiscale procedure, even in the linear case and under Assumptions~\ref{strongerassum} and \ref{stronger}, convergence might fail, indeed we might have that $\lim_n\|\sigma_n\|_{X}=+\infty$. In this simple example, we have that $\lambda_n=\lambda_0M^n$, for any $n\geq 0$,
with $M$ large enough ($M\geq 6$ would suffice). A more involved example with the same underlying idea, extends the result to any $M\geq 2$, see \cite[Section~4]{R-R}. In \cite[Section~4]{R-R} another counterexample is presented for a linear, although quite strange, blurring operator on the line, that is, for signals.
\cite{R-R} contains also some positive results, namely that convergence holds when the regularization is given by the norm of a Hilbert operator.

\section{The counterexample}\label{examplesec}

We begin by describing some properties of the minimization problem \eqref{existenceminimizereq}.

\begin{oss}\label{uniquenessremark} Under Assumption~\ref{strongerassum}, \eqref{existenceminimizereq} admits a solution. In general, if $u_0$ solves \eqref{existenceminimizereq}, then
$u$ solves \eqref{existenceminimizereq} if and only if $\Lambda(u-u_0)=0$ and $\|u\|_F=\|u_0\|_F$.
Thus uniqueness holds either if $\Lambda$ is injective on $F$
or $F$ is a strictly convex Banach space. 
\end{oss}

Let $G=F^{\ast}$ and let $\|\cdot\|_{\ast}$ be its norm.
Let $\tilde{\Lambda}\in H$ and $\tilde{\Lambda}^{\ast}$ be the linear functional on $H$ associated to $\tilde{\Lambda}$.
We say that $\tilde{\Lambda}^{\ast}\circ \Lambda \in G$ if the functional $F \ni u\mapsto \langle \tilde{\Lambda},\Lambda(u)\rangle_{H}$ is bounded with respect to the $F$-norm on $u$, that is, \begin{equation}\label{starnorm}
\|\tilde{\Lambda}^{\ast}\circ \Lambda\|_{\ast}:=\sup\{\langle \tilde{\Lambda},\Lambda(u)\rangle_{H}:\ u\in F\text{ with }\|u\|_F=1\}<+\infty.
\end{equation}

Using Meyer's arguments in \cite{YMey}, it is well-known that the following holds.

\begin{prop}\label{charprop}
Let $\tilde{\Lambda}\in H$ and let $u_0$ be a solution to \eqref{existenceminimizereq}.
Then we have the following characterizations.
\begin{enumerate}[a\textnormal{)}]
\item $u_0$ is a minimizer if and only if 
\begin{equation}\label{char}
\|v_0^{\ast}\circ\Lambda\|_{\ast}\leq \dfrac1{2\lambda_0}\quad\text{and}\quad\langle v_0,\Lambda(u_0)\rangle_{H}=\dfrac1{2\lambda_0}\|u_0\|_F,\end{equation}
where $v_0=\tilde{\Lambda}-\Lambda(u_0)$ and $v_0^{\ast}$ is the linear functional on $H$ associated to $v_0$.
\item $u_0=0$ if and only if $\tilde{\Lambda}^{\ast}\circ \Lambda\in G$ and $\|\tilde{\Lambda}^{\ast}\circ \Lambda\|_{\ast}\leq \dfrac1{2\lambda_0}.$
\item If $\|\tilde{\Lambda}^{\ast}\circ \Lambda\|_{\ast}> \dfrac1{2\lambda_0}$, including when $\tilde{\Lambda}\circ\Lambda\notin G$, then \eqref{char} may be replaced by
\begin{equation}\label{char2}
\|v_0^{\ast}\circ\Lambda\|_{\ast}=\dfrac1{2\lambda_0}\quad\text{and}\quad\langle v_0,\Lambda(u_0)\rangle_{H}=\dfrac1{2\lambda_0}\|u_0\|_F>0.\end{equation}
\end{enumerate}
\end{prop}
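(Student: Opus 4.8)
The plan is to carry out the standard Meyer-type subdifferential argument (as in \cite{YMey}) for the convex functional $J(u):=\lambda_0\|\tilde{\Lambda}-\Lambda(u)\|_H^2+\|u\|_F$, which is finite on $F$ and $+\infty$ on $X\setminus F$; since $J(0)=\lambda_0\|\tilde{\Lambda}\|_H^2<+\infty$, every solution of \eqref{existenceminimizereq} lies in $F$, and it suffices to characterize which $u_0\in F$ minimize $J$. Throughout set $v_0=\tilde{\Lambda}-\Lambda(u_0)$.

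For the necessity of \eqref{char} I would first test the minimality of $u_0$ against the competitors $w=u_0+tv$ with $v\in F$ and $t>0$: expanding $\|v_0-t\Lambda(v)\|_H^2$, bounding $\|u_0+tv\|_F\le\|u_0\|_F+t\|v\|_F$, dividing by $t$ and letting $t\to 0^+$ gives $\langle v_0,\Lambda(v)\rangle_H\le\frac1{2\lambda_0}\|v\|_F$ for all $v\in F$; replacing $v$ by $-v$ and invoking the definition \eqref{starnorm} yields $\|v_0^{\ast}\circ\Lambda\|_{\ast}\le\frac1{2\lambda_0}$, and the choice $v=u_0$ gives $\langle v_0,\Lambda(u_0)\rangle_H\le\frac1{2\lambda_0}\|u_0\|_F$. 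The reverse inequality comes from testing against $w=(1-t)u_0$, $t\in(0,1)$, where $\|w\|_F=(1-t)\|u_0\|_F$ \emph{exactly}: the same expansion, division by $t$ and limit $t\to 0^+$ produce $\langle v_0,\Lambda(u_0)\rangle_H\ge\frac1{2\lambda_0}\|u_0\|_F$, hence equality. Conversely, if $u_0\in F$ satisfies \eqref{char}, then for any $w\in F$ I would write $\tilde{\Lambda}-\Lambda(w)=v_0-\Lambda(w-u_0)$, discard the nonnegative cross term to get $\|\tilde{\Lambda}-\Lambda(w)\|_H^2\ge\|v_0\|_H^2-2\langle v_0,\Lambda(w-u_0)\rangle_H$, and combine $\langle v_0,\Lambda(w)\rangle_H\le\|v_0^{\ast}\circ\Lambda\|_{\ast}\|w\|_F\le\frac1{2\lambda_0}\|w\|_F$ with $\langle v_0,\Lambda(u_0)\rangle_H=\frac1{2\lambda_0}\|u_0\|_F$; the two occurrences of $\|w\|_F$ cancel and one is left with $J(w)\ge\lambda_0\|v_0\|_H^2+\|u_0\|_F=J(u_0)$. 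This settles a).

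Parts b) and c) then follow by specialization. For b): if $u_0=0$ then $v_0=\tilde{\Lambda}$ and \eqref{char} collapses to $\|\tilde{\Lambda}^{\ast}\circ\Lambda\|_{\ast}\le\frac1{2\lambda_0}$ — its finiteness forcing $\tilde{\Lambda}^{\ast}\circ\Lambda\in G$, and the second relation being vacuous since $\|0\|_F=0$; conversely that condition makes $u_0=0$ satisfy \eqref{char}, hence a minimizer by a). For c): when $\|\tilde{\Lambda}^{\ast}\circ\Lambda\|_{\ast}>\frac1{2\lambda_0}$ (in particular when $\tilde{\Lambda}^{\ast}\circ\Lambda\notin G$), part b) rules out $u_0=0$, so $\|u_0\|_F>0$ and the equality in \eqref{char} is strictly positive; moreover testing the supremum \eqref{starnorm} against the unit vector $u_0/\|u_0\|_F$ gives $\|v_0^{\ast}\circ\Lambda\|_{\ast}\ge\frac1{2\lambda_0}$, which together with the upper bound already in \eqref{char} upgrades it to the equality in \eqref{char2}.

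I do not expect a genuine obstacle here: this is \cite{YMey}'s computation transplanted to the abstract triple $(\Lambda,H,F)$. The one point to watch is the case $\tilde{\Lambda}^{\ast}\circ\Lambda\notin G$ appearing in c): one must keep every competitor $w$ inside $F$, where all pairings $\langle v_0,\Lambda(w)\rangle_H$ are honest real numbers and the duality bound attached to \eqref{starnorm} applies, so that no infinite quantity ever enters the estimates — the finiteness of $\|v_0^{\ast}\circ\Lambda\|_{\ast}$ is then an \emph{output} of \eqref{char}, not a hypothesis.
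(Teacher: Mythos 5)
Your argument is correct and is precisely the standard Meyer-type first-variation/duality computation that the paper invokes without writing out (it simply cites \cite{YMey} for this proposition): the one-sided perturbations $u_0+tv$ and $(1-t)u_0$ give necessity, the cancellation of $\|w\|_F$ gives sufficiency, and b), c) follow by specialization exactly as you describe. No gaps; your remark that finiteness of $\|v_0^{\ast}\circ\Lambda\|_{\ast}$ is an output rather than a hypothesis is the right point to flag for the case $\tilde{\Lambda}^{\ast}\circ\Lambda\notin G$.
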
 

We now describe our counterexample.
Let $X=l_1$ and $H=l_2$. We consider
$$F=\left\{\gamma\in l_1:\ \|\gamma\|_F:=\sum_{n=1}^{+\infty}n|\gamma_n|<+\infty\right\}.$$
We have that Assumption~\ref{strongerassum} is satisfied. Note that
$$G=\left\{\kappa:\ \|\kappa\|_{\ast}=\sup_{n\in\mathbb{N}}\frac{|\kappa_n|}{n}<+\infty\right\}$$
where the duality is given by
$$\langle \kappa,\gamma\rangle=\sum_{n=1}^{+\infty}\kappa_n\gamma_n\quad\text{for any }\kappa\in G\text{ and }\gamma\in F.$$

We fix constants $M\geq 2$, $\alpha_0>0$, $c_0>0$, $b>0$, $0<\delta<1$.
For any $n\geq 0$ we let
$$\lambda_n=\alpha_0M^n.$$
For any $j\in \mathbb{N}$, let
$\displaystyle \eta_j=\sqrt{\frac{c_0}{M^j}}$ and $\mu_j=-\delta\eta_j.$
For any $j\geq 0$ and $n\geq 0$, let
\begin{equation}\label{ujdef}
u_j=\frac{b}{j+2}e_{j+2}\quad\text{and}\quad
\sigma_n=\sum_{j=0}^nu_j.
\end{equation}

We define the linear operator $\Lambda:l_1\to l_2$ as follows
\begin{equation}\label{Lambdadefinition}
\Lambda(\gamma)=\sum_{j=1}^{+\infty}\gamma_j\Lambda(e_j)\quad\text{for any }\gamma\in l_1,
\end{equation}
where
$$\Lambda(e_1)=\left(\sum_{m=2}^{+\infty}\eta_me_m\right)  +\mu_{2}e_2$$
and for any $j\geq 2$
$$\Lambda(e_j)=\frac{j}{b}\left(\eta_je_j+\mu_{j}e_{j}  - \mu_{j+1}e_{j+1}\right).$$

\begin{lem}\label{injlemma}
The operator $\Lambda$ defined in \eqref{Lambdadefinition} is a bounded linear operator from $X=l_1$ into $H=l_2$. Moreover, 
$\Lambda$ is injective on $l_1$, thus in particular on $F$.
\end{lem}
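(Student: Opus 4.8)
The statement splits into two independent claims, boundedness and injectivity, and both can be handled by reducing everything to explicit coordinatewise computations, since $\Lambda$ is defined through its values on the canonical basis $\{e_j\}$ of $l_1$.

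For boundedness I would first estimate $\|\Lambda(e_j)\|_{l_2}$ for each $j$. For $j=1$ one gets $\|\Lambda(e_1)\|_{l_2}^2=(\eta_2+\mu_2)^2+\sum_{m\ge 3}\eta_m^2<+\infty$, the tail converging because $\eta_m^2=c_0M^{-m}$ with $M\ge 2$. For $j\ge 2$, using $\eta_j+\mu_j=(1-\delta)\eta_j$ and $|\mu_{j+1}|=\delta\eta_{j+1}$, one finds $\|\Lambda(e_j)\|_{l_2}^2=\frac{c_0 j^2}{b^2M^j}\big((1-\delta)^2+\delta^2/M\big)$, which tends to $0$ as $j\to\infty$ since the exponential decay dominates the factor $j^2$. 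Hence $C:=\sup_{j\ge 1}\|\Lambda(e_j)\|_{l_2}<+\infty$, so for $\gamma\in l_1$ the series $\sum_j\gamma_j\Lambda(e_j)$ is absolutely convergent in $l_2$ and $\|\Lambda(\gamma)\|_{l_2}\le C\|\gamma\|_{l_1}$; linearity being clear, this produces the bounded linear operator $l_1\to l_2$.

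For injectivity, let $\gamma\in l_1$ with $\Lambda(\gamma)=0$. Convergence in $l_2$ implies coordinatewise convergence, and for a fixed index $m$ only the basis images with $j\in\{1\}\cup\{m-1,m\}$ contribute to the $m$-th coordinate, so the scalar equations $\Lambda(\gamma)_m=0$ are legitimate. The first coordinate is vacuous ($\Lambda(e_j)$ has no $e_1$-component for any $j$); the second gives $(\eta_2+\mu_2)\big(\gamma_1+\tfrac2b\gamma_2\big)=0$, i.e.\ $b\gamma_1+2\gamma_2=0$; and for $m\ge 3$, after dividing by $\eta_m>0$, one obtains $\gamma_1+\frac{m(1-\delta)}{b}\gamma_m+\frac{(m-1)\delta}{b}\gamma_{m-1}=0$. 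The crucial step is then the substitution $s_m:=m\gamma_m$, which rewrites these relations as $s_2=-b\gamma_1$ and $(1-\delta)s_m+\delta s_{m-1}=-b\gamma_1$ for $m\ge 3$; a one-line induction yields $s_m=-b\gamma_1$ for all $m\ge 2$, that is, $\gamma_m=-b\gamma_1/m$. Since $\gamma\in l_1$ and $\sum_m 1/m=+\infty$, this forces $\gamma_1=0$, hence $\gamma=0$. Injectivity on $F$ follows at once from $F\subset l_1$.

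The argument is almost entirely routine; the only slightly delicate point is justifying the coordinatewise reading of the identity $\sum_j\gamma_j\Lambda(e_j)=0$ in the presence of the infinite tail of $\Lambda(e_1)$, which is harmless because the $l_2$-convergent partial sums converge pointwise and each coordinate of the limit is a finite sum. The one genuinely inventive ingredient is the rescaling $s_m=m\gamma_m$, which collapses a linear recurrence with variable coefficients to a constant sequence and makes the harmonic-series obstruction transparent; without it one would have to analyse the asymptotics of that recurrence directly (the solution being $\sim-b\gamma_1/m$ anyway).
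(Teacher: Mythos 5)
Your proof is correct and follows essentially the same route as the paper: the substitution $s_m=m\gamma_m$ is exactly the paper's ratio $\gamma_j/b_j$ with $b_j=b/j$, the induction yields the same relation $\gamma_j=-b\gamma_1/j$, and the conclusion via $\gamma\in l_1$ versus the divergent harmonic series is identical. The only difference is that you spell out the boundedness estimate $\sup_j\|\Lambda(e_j)\|_{l_2}<+\infty$, which the paper simply declares easy, and that level of detail is a welcome addition rather than a deviation.
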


\begin{proof}
It is easy to show that $\Lambda$ is a bounded linear operator from $l_p$ to $l_2$ for any $1\leq p\leq +\infty$.
We call $b_j=b/j$, $j\geq 2$.
About injectivity, we claim that
$\tilde{\gamma}=\Lambda(\gamma)=0$, that is, $\tilde{\gamma}_j=0$ for any $j\in\mathbb{N}$, if and only if
$$\frac{\gamma_j}{b_j}=-\gamma_1\quad\text{for any }j\geq 2.$$
It follows that $\gamma_1=0$, thus $\gamma=0$, otherwise $\gamma=\gamma_1(1,-b/2,\ldots,-b/j,\ldots)\not\in l^1$.
We prove the claim by induction. We have $\tilde{\gamma}_1=0$ and
$$0=\tilde{\gamma}_2=(\eta_2+\mu_2)\gamma_1+(\eta_2+\mu_2)\eta_2\frac{\gamma_2}{b_2},$$
that is, $\gamma_2/b_2=-\gamma_1$. Let $j\geq 3$ and assume that $\gamma_{j-1}/b_{j-1}=-\gamma_1$. Then
$$0=\tilde{\gamma}_j=\eta_j\gamma_1+(\eta_j+\mu_j)\frac{\gamma_j}{b_j}-\mu_j\frac{\gamma_{j-1}}{b_{j-1}}$$
and the proof is concluded.\end{proof}

We fix $\tilde{\Lambda}=\Lambda(e_1)$. Hence Assumption~\ref{stronger} is satisfied. In fact, $e_1\in F$ and, by the injectivity of $\Lambda$, $e_1$ is the only solution to
$$0=\|\tilde{\Lambda}-\Lambda(e_1)\|_H=\min\{\||\tilde{\Lambda}-\Lambda(\gamma)\|_H:\ \gamma\in l_1\}.$$
Nevertheless, we have the following result.
\begin{teo}\label{examplethm}
Let $\displaystyle M\geq \frac{1}{3-2\sqrt{2}}$ and $\alpha_0>0$. Then
there exist suitable constants $c_0>0$, $b>0$, $0<\delta<1$ such that the sequence $\{\sigma_n\}_{n\geq 0}$ as in \eqref{ujdef} coincides with the
multiscale sequence $\{\sigma_n\}_{n\geq 0}$ given by \eqref{regularizedpbm}, \eqref{inductiveconstr} and \eqref{partialsum1}.
\end{teo}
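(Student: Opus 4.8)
The plan is to verify, by induction on $n$, that each $u_n$ given by \eqref{ujdef} solves the incremental problem \eqref{inductiveconstr}, using the characterization in Proposition~\ref{charprop}. The crucial observation is that the operator $\Lambda$ has been rigged so that $\tilde{\Lambda}-\Lambda(\sigma_n)$ has an explicit, telescoping form. Indeed, since $\tilde{\Lambda}=\Lambda(e_1)$ and $\Lambda(u_j)=\Lambda\!\left(\tfrac{b}{j+2}e_{j+2}\right)=\eta_{j+2}e_{j+2}+\mu_{j+2}e_{j+2}-\mu_{j+3}e_{j+3}$, I expect the residual $v_n:=\tilde{\Lambda}-\Lambda(\sigma_n)$ to collapse to something like $v_n=\Big(\sum_{m\ge n+3}\eta_m e_m\Big)+\mu_{n+3}e_{n+3}$, i.e.\ the same shape as $\tilde{\Lambda}$ but shifted to start at index $n+3$. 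So the first step is to compute $v_n$ explicitly and confirm this telescoping identity; the geometric decay $\eta_j=\sqrt{c_0/M^j}$ makes $v_n\in l_2$ with a computable norm.

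The second step is to check the two conditions in \eqref{char} (in the form \eqref{char2}, since we are in the non-trivial regime) for the problem \eqref{inductiveconstr} at stage $n$, whose residual is $v_n$ and whose parameter is $\lambda_n=\alpha_0M^n$. For the equality condition $\langle v_n,\Lambda(u_n)\rangle_H=\tfrac{1}{2\lambda_n}\|u_n\|_F$: the left side is a short explicit sum in $\eta$'s and $\mu$'s (only the $e_{n+3}$ and $e_{n+4}$ components of $\Lambda(u_n)$ interact with $v_n$), while $\|u_n\|_F=(n+2)\cdot\tfrac{b}{n+2}=b$ is constant. This produces one algebraic equation relating $c_0,\delta,b,\alpha_0,M,n$; because all the $n$-dependence enters through the common factor $M^{-n}$ (from $\eta_m^2$ scaling and $\lambda_n$ scaling), this single equation should be $n$-independent and hence fixes one relation among the constants. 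For the dual-norm inequality $\|v_n^{\ast}\circ\Lambda\|_{\ast}\le\tfrac{1}{2\lambda_n}$ (with equality, per \eqref{char2}): one must compute $\langle v_n,\Lambda(e_j)\rangle_H$ for every $j\ge1$, divide by $\|e_j\|_F=j$, and take the supremum over $j$. The requirement is that this supremum equals $\tfrac{1}{2\lambda_n}$ and is attained (it should be attained at $j=n+3$, matching the direction $u_n=\tfrac{b}{n+2}e_{n+2}$ — wait, $e_{n+2}$, so the index bookkeeping must be done carefully).

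The main obstacle, and the real content of the theorem, is the dual-norm inequality: one must show $\sup_{j\ge1}\tfrac{|\langle v_n,\Lambda(e_j)\rangle_H|}{j}\le\tfrac{1}{2\lambda_n}$ for \emph{all} $j$ simultaneously, not just at the maximizing index. For small $j$ (below the ``active'' index) the inner product $\langle v_n,\Lambda(e_j)\rangle_H$ will be small — here the factor $\tfrac{j}{b}$ in the definition of $\Lambda(e_j)$ and the fact that $v_n$ has no components below index $n+3$ help — while for large $j$ the inner product is again a short explicit expression $\tfrac{j}{b}\big(\eta_j^2+\mu_j\eta_j-\mu_{j+1}\eta_{j+1}\big)$ (plus the $\mu$ cross-term), whose size relative to $j$ decays like $M^{-j}$. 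The condition $M\ge\frac{1}{3-2\sqrt2}=3+2\sqrt2$ is presumably exactly what is needed to make the worst-case ratio (likely at $j=n+4$, the index just past the active one) come out $\le\tfrac{1}{2\lambda_n}$ after the relation from Step~2 is imposed; the quadratic in $\sqrt M$ hiding behind $3-2\sqrt2=(\sqrt2-1)^2$ suggests the estimate reduces to a quadratic inequality in $\sqrt M$ or $1/\sqrt M$.

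Concretely, the steps I would carry out are: (i) fix the free relation, say solve the equality in \eqref{char2} to get $\delta$ (or $c_0$) as a function of $b,\alpha_0,M$, choosing $b,c_0$ small enough that $0<\delta<1$; (ii) with this choice, compute $R_j:=\tfrac{1}{j}\langle v_n,\Lambda(e_j)\rangle_H$ for $j<n+3$, $j=n+3$, $j=n+4$, and $j>n+4$ separately, showing each $|R_j|\le\tfrac{1}{2\lambda_n}$ with equality at the active index; (iii) confirm the base case $n=0$ (the problem \eqref{regularizedpbm}) is just this computation at $n=0$; (iv) invoke Proposition~\ref{charprop} and Remark~\ref{uniquenessremark} (Lemma~\ref{injlemma} gives $\Lambda$ injective on $F$, hence the minimizer of each incremental problem is unique, so $u_n$ is forced to be exactly $\tfrac{b}{n+2}e_{n+2}$), closing the induction. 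The only genuinely delicate estimate is step~(ii) for $j$ near $n+3$, and that is where the hypothesis $M\ge3+2\sqrt2$ gets used.
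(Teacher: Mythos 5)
Your roadmap coincides with the paper's actual proof: fix $\tilde\Lambda=\Lambda(e_1)$, verify via Proposition~\ref{charprop} that the explicit $u_n$ of \eqref{ujdef} solves \eqref{inductiveconstr}, exploit the telescoping residual (which you predict correctly: $\tilde\Lambda-\Lambda(\sigma_n)=\sum_{m\ge n+3}\eta_m e_m+\mu_{n+3}e_{n+3}$), note that the common $M^{-n}$ scaling of $\eta_m^2$ and $\lambda_n^{-1}$ makes the conditions $n$-independent, and use injectivity of $\Lambda$ (Lemma~\ref{injlemma} with Remark~\ref{uniquenessremark}) to conclude that the multiscale sequence is forced to be \eqref{ujdef}. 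The problem is that the proposal stops exactly where the content of the theorem begins: the existence of $c_0,b,\delta$ and the uniform bound $\sup_{j\ge1}\frac1j|\langle v_n,\Lambda(e_j)\rangle_H|\le\frac1{2\lambda_n}$ are only announced (``presumably exactly what is needed''), never established, and your index bookkeeping is off at the decisive point. Writing $A_{j}=\frac1j\langle v_n,\Lambda(e_j)\rangle_H$ and $n_1=n+2$, the supremum is attained at the \emph{active} index $j=n_1=n+2$ (not $n+3$), where $A_{n_1}=\frac{c_0}{bM^{n_1+1}}\delta(1-\delta)$; since $\|u_n\|_F=b$, the pairing equality in \eqref{char} reduces precisely to $A_{n_1}=\frac1{2\lambda_n}$, which fixes $c_0=\frac{bM^3}{2\alpha_0\delta(1-\delta)}$. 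The binding competitor is $j=n_1+1=n+3$ (not $n+4$), with $A_{n_1+1}=\frac{c_0}{bM^{n_1+1}}\bigl((1-\delta)^2+\frac{\delta}{M}\bigr)$, and the inequality $(1-\delta)^2+\frac{\delta}{M}\le\delta(1-\delta)$, i.e.\ $2\delta^2+(\frac1M-3)\delta+1\le0$, is exactly the quadratic that admits a root $0<\delta<1$ (take $\delta=\frac{3-1/M}{4}$) if and only if $M\ge\frac1{3-2\sqrt2}$; this is where the hypothesis enters, and it cannot be waved through.

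Two further points you gloss over but must handle to close the induction: the case $j=1$ is \emph{not} automatically small, because $\Lambda(e_1)$ has an infinite tail overlapping the residual, giving $0<A_1\le\frac{c_0}{M^{n_1+1}}\frac{M}{M-1}$, so one needs $b$ chosen small, e.g.\ $b=\delta(1-\delta)\frac{M-1}{2M}$, to get $A_1\le A_{n_1}$; and for $j\ge n_1+2$ one computes $A_j=\frac{c_0}{bM^{j}}\bigl((1-\delta)+\frac{\delta}{M}\bigr)$, which is dominated by the $j=n_1+1$ value for every $M\ge2$ and every $\delta$ (this requires checking a second, always-true quadratic inequality). With these computations supplied, your plan becomes the paper's proof; without them, the existence of the constants --- which is the statement to be proved --- remains unverified.
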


\begin{oss}
By injectivity of $\Lambda$ and Remark~\ref{uniquenessremark}, the multiscale sequence is uniquely defined. Since $\lim_n\|\sigma_n\|_X= +\infty$,
 no subsequence of $\sigma_n$ converges, despite we are in a linear case and 
both Assumptions~\ref{strongerassum} and \ref{stronger} hold.
However, $\Lambda$ as a bounded linear operator from $l_2$ to $l_2$ is not injective anymore and actually
$\displaystyle \sigma_n\to \sigma_{\infty}=\sum_{j=2}\frac{b}{j}e_j$, where the series is in the $l_2$ sense, and, by the proof of Lemma~\ref{injlemma}, indeed we have $\Lambda(\sigma_{\infty})=\Lambda(e_1)$.
\end{oss}

\begin{proof} By
 Proposition~\ref{charprop}, it is enough to show that
for any $n\geq 0$ we have
\begin{multline}\label{tobeproved}
\|(\Lambda(e_1)-\Lambda(\sigma_n))\circ\Lambda\|_{\ast}\leq \dfrac1{2\lambda_n}\quad\text{and}\\\langle \Lambda(e_1)-\Lambda(\sigma_n),\Lambda(u_n)\rangle_{H}=\dfrac1{2\lambda_n}\|u_n\|_F,
\end{multline}
that is, that \eqref{char} holds for any $n\geq 0$.

For any $n\geq 0$, we call $n_1=n+2\geq 2$ and compute
$$\displaystyle \Lambda(\sigma_n)=\sum_{j=2}^{n_1}\frac{b}{j}\Lambda(e_j)=\left(\sum_{m=2}^{n_1}\eta_me_m\right)+\mu_2e_2-\mu_{n_1+1}e_{n_1+1},$$
therefore
$$
\Lambda(e_1)-\Lambda(\sigma_n)   =\left(\sum_{m=n_1+`}^{+\infty}\eta_me_m\right)+\mu_{n_1+1}e_{n_1+1}.
$$
For any $j\in \mathbb{N}$ and $n_1=n+2\geq 2$, we call
$$A_{j,n_1}=\frac1j\langle \Lambda(e_1)-\Lambda(\sigma_n),\Lambda(e_j)\rangle_{H}$$
so that
$$\|(\Lambda(e_1)-\Lambda(\sigma_n))\circ\Lambda\|_{\ast}=\sup_{j\in\mathbb{N}}|A_{j,n_1}|.$$
Condition \eqref{tobeproved}, hence Theorem~\ref{examplethm}, is an immediate consequence of the following.
\begin{claim}\label{crucialclaim} 
There exist suitable constants $c_0>0$, $b>0$, $0<\delta<1$ such that
for any $n\geq 0$, calling $n_1=n+2$, we have
\begin{equation}\label{crucialclaimprop}
|A_{j,n_1}|\leq A_{n_1,n_1}=\frac{1}{2\lambda_n}=\frac1{2\alpha_0M^{n}}=\frac{M^2}{2\alpha_0M^{n_1}}\quad\text{for any }j\in\mathbb{N}.
\end{equation}
\end{claim}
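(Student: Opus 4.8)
The plan is to verify Claim~\ref{crucialclaim} — which, as already noted, yields \eqref{tobeproved} and hence Theorem~\ref{examplethm} through Proposition~\ref{charprop} — by computing all the numbers $A_{j,n_1}$ explicitly and then tuning the still free parameters $c_0,b,\delta$. First observe that the second requirement in \eqref{tobeproved} is exactly the equality half of \eqref{crucialclaimprop}: since $u_n=\tfrac{b}{n_1}e_{n_1}$ we have $\|u_n\|_F=b$ and $\langle\Lambda(e_1)-\Lambda(\sigma_n),\Lambda(u_n)\rangle_H=bA_{n_1,n_1}$, so that requirement reads $A_{n_1,n_1}=\tfrac1{2\lambda_n}$ (in particular $A_{n_1,n_1}>0$); and the first requirement in \eqref{tobeproved} is $\sup_j|A_{j,n_1}|\le\tfrac1{2\lambda_n}$. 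Both together are precisely \eqref{crucialclaimprop}. The starting point of the computation is the identity already derived, namely $\Lambda(e_1)-\Lambda(\sigma_n)=\sum_{m\ge n_1+1}\eta_me_m+\mu_{n_1+1}e_{n_1+1}$; writing $\mu_j=-\delta\eta_j$, this vector is supported on $\{m\ge n_1+1\}$, with $(n_1+1)$-th component $(1-\delta)\eta_{n_1+1}$ and $m$-th component $\eta_m$ for $m\ge n_1+2$.

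Next I would compute $A_{j,n_1}$ case by case, using that for $j\ge2$ the vector $\Lambda(e_j)$ is supported on $\{j,j+1\}$ with $j$-th component $(1-\delta)\tfrac{j}{b}\eta_j$ and $(j+1)$-th component $\delta\tfrac{j}{b}\eta_{j+1}$, together with $\eta_j^2=c_0M^{-j}$. This gives $A_{j,n_1}=0$ for $2\le j\le n_1-1$; moreover
$$A_{n_1,n_1}=\frac{\delta(1-\delta)c_0}{bM^{n_1+1}},\qquad A_{n_1+1,n_1}=\frac{c_0}{bM^{n_1+1}}\Bigl((1-\delta)^2+\frac{\delta}{M}\Bigr),$$
$$A_{j,n_1}=\frac{c_0}{bM^{j}}\Bigl((1-\delta)+\frac{\delta}{M}\Bigr)\quad(j\ge n_1+2),$$
all of which are positive, the last being decreasing in $j$; and, separately, because $\Lambda(e_1)$ is supported on all indices $m\ge2$,
$$A_{1,n_1}=\frac{c_0}{M^{n_1}}\Bigl(\frac{1}{M-1}-\frac{\delta}{M}\Bigr)>0,$$
which — unlike the preceding quantities — carries no factor $b^{-1}$.

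Then I would fix the parameters. Imposing $A_{n_1,n_1}=\tfrac{M^2}{2\alpha_0M^{n_1}}$ amounts to $\delta(1-\delta)c_0=\tfrac{bM^3}{2\alpha_0}$, which pins down only the ratio $c_0/b$. With this ratio fixed, each bound $A_{j,n_1}\le A_{n_1,n_1}$ for $j\ge n_1+1$ becomes an inequality in $M,\delta$ alone: the case $j=n_1+1$ reads $(1-\delta)^2+\tfrac{\delta}{M}\le\delta(1-\delta)$, i.e.
$$M\ge\frac{\delta}{(1-\delta)(2\delta-1)},$$
which forces $\delta\in(1/2,1)$; and the cases $j\ge n_1+2$ reduce to $(1-\delta)+\tfrac{\delta}{M}\le M\delta(1-\delta)$, which one checks follows from the previous inequality as soon as $M\ge\sqrt{\delta}/(1-\delta)$. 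A one-variable computation shows $\delta\mapsto\frac{\delta}{(1-\delta)(2\delta-1)}$ attains on $(1/2,1)$ its minimum $\tfrac1{3-2\sqrt2}$ at $\delta=1/\sqrt2$ (its derivative vanishes when $2\delta^2=1$). Hence for $M\ge\tfrac1{3-2\sqrt2}$ the choice $\delta=1/\sqrt2$ is admissible and also satisfies $M\ge\sqrt\delta/(1-\delta)$, so every case $j\ge n_1+1$ holds; and the case $j=1$ holds as soon as $c_0$ is chosen small enough (shrinking $b$ so as to keep $c_0/b$ fixed), since $A_{1,n_1}$ has no $b^{-1}$ while $A_{n_1,n_1}=\tfrac{M^2}{2\alpha_0M^{n_1}}$ is now a fixed number. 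Collecting the cases gives \eqref{crucialclaimprop} for all $j\in\N$ and $n\ge0$.

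The main obstacle is the bookkeeping rather than any single estimate: one must isolate the indices $j\in\{1,\,n_1,\,n_1+1,\,\ge n_1+2\}$ and recognize that $A_{n_1,n_1}$ and $A_{n_1+1,n_1}$ are the only quantities of exact order $M^{-(n_1+1)}$, so their comparison is the binding constraint and optimizing it in $\delta$ is what produces the threshold $\tfrac1{3-2\sqrt2}$ (with $\delta=1/\sqrt2$ the unique admissible choice at the borderline $M=\tfrac1{3-2\sqrt2}$). The secondary subtlety is the asymmetric role of $j=1$: the term $\Lambda(e_1)$, unlike the $\Lambda(e_j)$, is not finitely supported and does not carry the $b^{-1}$ scaling, which is why one first fixes $c_0/b$ from the equality at $j=n_1$ and only afterwards lets $c_0\to0$ to absorb $A_{1,n_1}$.
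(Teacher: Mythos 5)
Your proposal is correct and takes essentially the same route as the paper: the same explicit case-by-case computation of $A_{j,n_1}$, the same use of the equality at $j=n_1$ to pin down $c_0/b$ so that $A_{n_1,n_1}=\frac1{2\lambda_n}$, and the same identification of $A_{n_1+1,n_1}\leq A_{n_1,n_1}$ as the binding constraint, optimized in $\delta$ to produce the threshold $M\geq\frac{1}{3-2\sqrt{2}}$. The only (harmless) differences are that you fix $\delta=1/\sqrt{2}$ instead of the paper's $M$-dependent choice $\delta=\frac{3-\frac1M}{4}$, handle $j=1$ by shrinking $c_0$ at fixed ratio $c_0/b$ rather than via the explicit choice $b=\delta(1-\delta)\frac{M-1}{2M}$, and dispose of the tail cases $j\geq n_1+2$ by an asserted comparison (the paper proves instead that $\frac{1-\delta}{M}+\frac{\delta}{M^2}\leq(1-\delta)^2+\frac{\delta}{M}$ for all $\delta$ and $M\geq2$), all of which check out.
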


To prove Claim~\ref{crucialclaim}, we begin by computing $A_{j,n_1}$. First,
$$
0<A_{1,n_1}=
(\eta_{n_1+1}+\mu_{n_1+1})\eta_{n_1+1}
 +\left(\sum_{m=n_1+2}^{+\infty}\eta^2_m\right)\leq \frac{c_0}{M^{n_1+1}}\left(\frac{M}{M-1}\right).
$$
We have
$$A_{j,n_1}=0\quad\text{for any }2\leq j\leq n_1-1.$$
We have
$$
A_{n_1.n_1}=
-\mu_{n_1+1}(\eta_{n_1+1}+\mu_{n_1+1})=\frac{c_0}{bM^{n_1+1}}\delta(1-\delta).
$$
In order to have
$$0<A_{1,n_1}<A_{n_1,n_1}=\frac{M^2}{2\alpha_0M^{n_1}}=\frac{M^3}{2\alpha_0M^{n_1+1}}$$
it is enough to choose
$$b:=\delta(1-\delta)\frac{M-1}{2M}\quad\text{and}\quad c_0:=\frac{bM^3}{2\alpha_0\delta(1-\delta)}.$$
We note that $b$ depends on $M$ and $\delta$ only, whereas
$c_0$ depends on $M$, $\alpha_0$ and $\delta$ only.

We have that
$$A_{n_1+1.n_1}=\frac1b\left((\eta_{n_1+1}+\mu_{n_1+1})^2-\mu_{n_1+2}\eta_{n_1+2}\right)=\frac{c_0}{bM^{n_1+1}}\left((1-\delta)^2+\frac{\delta}M\right)
$$
and
\begin{multline*}
A_{n_1+s.n_1}=\frac1b\left(\eta_{n_1+s}(\eta_{n_1+s}+\mu_{n_1+s})-\mu_{n_1+s+1}\eta_{n_1+s+1}\right)\\
=\frac{c_0}{bM^{n_1+s}}  \left((1-\delta)+\frac{\delta}{M}\right)\leq \frac{c_0}{bM^{n_1+1}}\left(\frac{(1-\delta)}{M}+\frac{\delta}{M^2}\right)
 \quad\text{for any }s\geq 2.$$
\end{multline*}
We note that
$$\frac{(1-\delta)}{M}+\frac{\delta}{M^2}\leq (1-\delta)^2+\frac{\delta}M$$
if and only if
$$\delta^2-2\left(1-\frac1M+\frac1{2M^2}\right)\delta+1-\frac1M\geq 0 $$
which is true for any $\delta\in \R$ and any $M\geq 2$.
Hence, it is enough to show that for
some $\delta$, $0<\delta<1$, we have
\begin{equation}\label{tobeproved2}
(1-\delta)^2+\frac{\delta}M\leq \delta(1-\delta),\quad\text{that is,}\quad
2\delta^2+\left(\frac1M-3\right)\delta+1 \leq0.
\end{equation}
For
$$\delta:=\frac{3-\frac1M}{4},$$
we have that $0<\delta<1$ and the polynomial in \eqref{tobeproved2} is less than or equal to $0$ if and only if
$$\left(3-\frac1M\right)^2\geq 8,\quad\text{that is,}\quad M\geq \frac{1}{3-2\sqrt{2}}.$$
The proof is concluded.
\end{proof}

\noindent
\textbf{Acknowledgements.}
SR and LR acknowledge support by project
PRIN 2022 n.~2022B32J5C funded by MUR, Italy, and by the European Union – Next Generation EU.
SR is also supported by ...
LR is also supported by GNAMPA-INdAM through 2023 projects. The second author wishes to thank Cristiana Seibu for her kind invitation to give a talk at the Minisymposium 17 of IPMS2024, where this work was presented.

\end{document}